\newtheorem{thm}{Theorem}
\newtheorem{cor}{Corollary}
\newtheorem{lem}{Lemma}
\date{August 31, 2015}
\begin{document}
\title{Bounds and power means for the general Randi\'c index}
\author{Clive Elphick\thanks{\texttt{clive.elphick@gmail.com}}\quad\quad Pawel Wocjan\thanks{Department of Electrical Engineering and Computer Science, University of Central Florida, Orlando, USA; \texttt{wocjan@eecs.ucf.edu}}}

\maketitle

\abstract{We review bounds for the general Randi\'c index, $R_{\alpha} = \sum_{ij \in E} (d_i d_j)^\alpha$, and use the power mean inequality to prove, for example,  that $R_\alpha \ge m\lambda^{2\alpha}$ for $\alpha < 0$, where $\lambda$ is the spectral radius of a graph. This enables us to strengthen various known lower and upper bounds for $R_\alpha$ and to generalise a non-spectral bound due to Bollob\'as \emph{et al}. We also prove that the zeroth-order general Randi\'c index, $Q_\alpha = \sum_{i \in V} d_i^\alpha \ge n\lambda^\alpha$ for $\alpha < 0$.}

\section{Introduction}

Let $G$ be  a graph with no isolated vertices with vertex set $V(G)$ where $n = |V|$, edge set $E(G)$ where $m = |E|$, degrees $\Delta = d_1 \ge ... \ge d_n = \delta \ge 1$ and average degree $d$. Let $A$ denote the adjacency matrix of $G$ and let $\lambda$ denote the largest eigenvalue of $A$. Let $\omega(G)$ denote the clique number of $G$ and $\chi(G) 
$ denote the chromatic number of $G$. Define the general Randi\'c index, $R_\alpha$, and the zeroth-order general Randi\'c index, $Q_\alpha$,  as usual as:

\[
R_{\alpha} = \sum_{ij \in E} (d_i d_j)^\alpha \mbox{   and  } Q_\alpha = \sum_{i \in V} d_i^\alpha.
\]

$R_{-0.5}$ is the best known and most studied topological index used by mathematical chemists. Gutman \cite{gutman13} published a recent survey of degree-based topological indices, in which he compares the performance of numerous indices in chemical applications. 

Note that $R_\alpha =  M_2^\alpha$ (the variable second Zagreb index) and that $Q_\alpha = M_1^{\alpha/2}$ (the variable first Zagreb index). In particular $R_1 = M_2$ and $Q_2 = M_1$. We will not refer to Zagreb indices again in this paper, and from here onwards $M_p$ will refer to a generalized $p$-mean rather than a Zagreb index. 

In section 2 we introduce the power mean inequality and prove the Lemma which underpins the results in this paper. In sections 3 and 4 we use this Lemma to derive bounds for $R_\alpha$ and $Q_\alpha$ using eigenvalues and degrees respectively. We then review implications of these general bounds for $R_{-1}$ and $R_{-0.5}$ and conclude with a summary of power means for the general Randi\'c indices.

\section{Power mean inequality}

It is convenient to introduce the terminology of power means (also known as generalized means). Let $w_1, ..., w_n$ be $n$ positive real numbers and let $p$ be a real number. Define the sum of the $p$-powers as:

\[
S_p(w_1, ..., w_n) = S_p(w_i) = \sum_{i=1}^n w_i^p
\]

and the generalized $p$-mean, for $p \neq 0$ as:

\[
M_p(w_1, ..., w_n) = M_p(w_i) = \left(\frac{1}{n} S_p(w_1, ..., w_n)\right)^{1/p}.
\]

Throughout this paper we will refer to $M_p(w_i)$ as a $p$-power mean. Note that $p = 1$ corresponds to the arithmetic mean and $p = -1$ corresponds to the harmonic mean. We define $M_0$ to be the geometric mean as follows:

\[
M_0(w_1, ..., w_n) = M_0(w_i) = \left(\prod_{i=1}^n w_i\right)^{1/n}.
\]

It is important that the above definition is consistent with the following limit process:

\[
M_0(w_1, ..., w_n) = \lim_{p\rightarrow 0} M_p(w_1, ..., w_n).
\]
For all real $p < q$ the well known power mean inequality states that:
\begin{equation}\label{eq:power_ineq}
M_p(w_1, ..., w_n) \le M_q(w_1, ..., w_n)
\end{equation}
with equality if and only if $w_1 = ... = w_n$. There are several rigorous proofs of this inequality, including for $p = 0$, for example by Hardy \emph{et al} \cite{hardy52}. 

We use the fact that 
\begin{equation}\label{eq:MpSp}
n \cdot [M_p(w_1,\ldots,w_n)]^p = S_p(w_1,\ldots,w_n)
\end{equation}
for all $w_1,\ldots,w_n>0$ and for all $p$, including the case $p=0$. 

The following lemma is used to prove many of the new bounds in this paper.

\begin{lem}
Let $q$ be arbitrary.  Assume that for the generalized $q$-mean
\[
L \le M_q(w_1, ..., w_n) \le U
\]
where $L$ and $U$ are lower and upper bounds.  Then, we have the following inequalities:

\begin{itemize}
\item
for $p \le q$ and
\begin{itemize}
\item for $p\ge 0$
\[
S_p(w_1, ..., w_n) \le nU^p
\]
\item for $p<0$
\[
S_p(w_1, ..., w_n) \ge nU^p
\]
\end{itemize}
\item
for $p \ge q$ and
\begin{itemize}
\item for $p\ge 0$
\[
n L^p \le S_p(w_1, ..., w_n)
\]
\item for $p<0$
\[
n L^p \ge S_p(w_1, ..., w_n)
\]
\end{itemize}
\end{itemize}
\end{lem}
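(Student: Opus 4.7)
The plan is to combine two ingredients: the power mean inequality~(\ref{eq:power_ineq}), which relates $M_p$ to $M_q$ according to the sign of $p-q$, and the identity~(\ref{eq:MpSp}), which converts any bound on $M_p(w_i)$ into a corresponding bound on $S_p(w_i)$. Throughout, I would keep in mind that for positive reals the map $x \mapsto x^p$ is strictly increasing when $p>0$, equals $1$ identically when $p=0$, and is strictly decreasing when $p<0$; this monotonicity is the only source of sign flips in the argument.

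For the first case, suppose $p \le q$. Then~(\ref{eq:power_ineq}) gives $M_p(w_i) \le M_q(w_i) \le U$. Raising to the $p$-th power and applying~(\ref{eq:MpSp}): if $p \ge 0$, monotonicity preserves the inequality and yields $S_p(w_i) = n [M_p(w_i)]^p \le n U^p$; if $p<0$, the inequality reverses and yields $S_p(w_i) \ge n U^p$. For the second case, suppose $p \ge q$; then~(\ref{eq:power_ineq}) gives $M_p(w_i) \ge M_q(w_i) \ge L$, and an identical sign analysis on $x \mapsto x^p$ produces $S_p(w_i) \ge n L^p$ when $p \ge 0$ and $S_p(w_i) \le n L^p$ when $p<0$.

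No step is genuinely difficult; the main obstacle is bookkeeping, namely tracking which direction the inequality flips in each of the four subcases, and ensuring the $p=0$ boundary is handled consistently via the limit definition of $M_0$ so that~(\ref{eq:MpSp}) still applies (indeed, both sides collapse to $n$ in that case). Once these sign conventions are pinned down, all four cases of the lemma follow from a single invocation each of the power mean inequality combined with the monotonicity of $x \mapsto x^p$ on positive reals.
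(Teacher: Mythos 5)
Your proposal is correct and follows essentially the same route as the paper's proof: apply the power mean inequality to compare $M_p$ with $M_q$ and hence with $U$ or $L$, then use the identity $S_p = n[M_p]^p$ together with the monotonicity of $x \mapsto x^p$, reversing the inequality when $p<0$. Your explicit handling of the $p=0$ boundary is a minor added care that the paper leaves implicit.
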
 

\begin{proof}
The power mean inequality (\ref{eq:power_ineq}) implies that $M_p(w_1,\ldots,w_n)\le U$ for $p<q$ and $L \le M_p(w_1,\ldots,w_n)$ for $p>q$.

We apply the function $x \mapsto n x^p$ to go from $M_p(w_1,\ldots,w_n)$ to $S_p(w_1,\ldots,w_n)$ as in (\ref{eq:MpSp}).  We have to reverse the direction of the above inequalities when applying this function for $p<0$.    
\end{proof}

\section{Bounds for $R_\alpha$ and $Q_\alpha$ using eigenvalues}

Favaron \emph{et al} \cite{favaron93} proved that $R_{-0.5} \ge m/\lambda$ and Runge \cite{runge76} and Hofmeister \cite{hofmeister94} proved that $R_{-1} \ge m/\lambda^2$. We can generalise these results as follows.

\begin{thm}
We have the following lower and upper bounds for $R_\alpha$ and $Q_\alpha$:
\begin{itemize}
\item For $\alpha < 0$, 
\[
R_\alpha \ge m\lambda^{2\alpha},
\]
\item For $0 < \alpha \le 0.5$, 
\[
R_\alpha \le m\lambda^{2\alpha},
\]
\item For $\alpha < 0$,  
\[
Q_\alpha \ge n\lambda^\alpha,
\] 
\item For $0 < \alpha \le 1$, 
\[
Q_\alpha \le n\lambda^\alpha.
\]
\end{itemize}
\end{thm}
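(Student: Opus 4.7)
The plan is to apply the previous Lemma in each of the four cases, choosing the underlying weight sequence, the value of $q$, and an upper bound $U$ for $M_q$ appropriately.  In every case the real work is to produce the bound $M_q \le U$ by a Rayleigh-quotient calculation; the Lemma then converts this into the desired bound on $S_p$ (with the sign of $p$ dictating whether the inequality flips).

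For the two $R_\alpha$ bounds I would index the weights by the edges and take $w_{ij} = d_i d_j$ for $ij \in E$, so that $S_p(w) = R_p$ and there are $m$ weights in total.  The key input is the inequality $R_{0.5} \le m\lambda$, which follows by plugging the test vector $x_i = \sqrt{d_i}$ into the Rayleigh characterisation of $\lambda$: the numerator $x^\top A x$ equals $2\sum_{ij\in E}\sqrt{d_i d_j} = 2R_{0.5}$ and the denominator $x^\top x$ equals $\sum_i d_i = 2m$.  Rewriting $R_{0.5} \le m\lambda$ as $M_{0.5}(w) \le \lambda^2$, I would apply the Lemma with $q=0.5$, $U=\lambda^2$, and $p=\alpha$.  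The subcase $p \le q$, $p \ge 0$ delivers $R_\alpha \le m\lambda^{2\alpha}$ for $0 < \alpha \le 0.5$, and the subcase $p \le q$, $p < 0$ flips the inequality to give $R_\alpha \ge m\lambda^{2\alpha}$ for $\alpha < 0$.

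For the two $Q_\alpha$ bounds I would take the weights to be the degrees $d_1,\ldots,d_n$, so that $S_p(d_i) = Q_p$.  Here the needed inequality $M_1(d_i) \le \lambda$ is just the classical fact that the average degree $2m/n$ is a lower bound for $\lambda$, obtained from the Rayleigh quotient with the all-ones vector.  Applying the Lemma with $q=1$, $U=\lambda$, and $p=\alpha$ yields $Q_\alpha \le n\lambda^\alpha$ for $0 < \alpha \le 1$ from the $p\ge 0$ subcase and $Q_\alpha \ge n\lambda^\alpha$ for $\alpha < 0$ from the $p<0$ subcase.

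The only genuine obstacle is identifying the correct test vector in the Rayleigh quotient for $R_\alpha$; the weighting $x_i=\sqrt{d_i}$ is the one that makes the numerator produce $R_{0.5}$ and the denominator produce $2m$, after which everything else is bookkeeping handled uniformly by the Lemma.  In particular, the four cases of the theorem correspond exactly to the four subcases of the Lemma, and the seemingly odd range restrictions ($\alpha \le 0.5$ for $R_\alpha$, $\alpha \le 1$ for $Q_\alpha$) are just the constraint $p \le q$ with $q = 0.5$ and $q=1$ respectively.
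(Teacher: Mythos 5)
Your proof is correct and follows essentially the same route as the paper: both reduce the theorem to Lemma 1 applied with $q=0.5$, $U=\lambda^2$ on the edge weights $d_i d_j$ and with $q=1$, $U=\lambda$ on the degrees. The only difference is cosmetic --- the paper simply cites Favaron \emph{et al.} for the inequality $M_{0.5}(d_i d_j)\le\lambda^2$, whereas you supply its standard Rayleigh-quotient proof with the test vector $x_i=\sqrt{d_i}$.
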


\begin{proof}

Favaron \emph{et al} \cite{favaron93} proved that:

\[
\left(\frac{1}{m} \sum_{ij \in E} \sqrt{d_i.d_j}\right)^2 \le \lambda^2.
\]

In other words, $\lambda^2$ is an upper bound on the $0.5$-power mean of the $m$ values of $d_i \cdot d_j$ for $(i , j) \in E$. Therefore using Lemma 1 we obtain:
\begin{itemize}
\item For $\alpha < 0$, 
\[
R_\alpha = S_\alpha(d_i \cdot d_j) \ge mU^\alpha = m\lambda^{2\alpha}.
\]
\item For $0 < \alpha \le 0.5$, 
\[
R_\alpha = S_\alpha(d_i \cdot d_j) \le mU^\alpha = m\lambda^{2\alpha}.
\]
\end{itemize}
There is equality in these bounds for $R_\alpha$ when $d_i \cdot d_j$ is equal for all edges in $E$. This is the case for regular graphs and semiregular bipartite graphs.

It is well known that:

\[
\frac{\sum_{i \in V} d_i}{n} = d \le \lambda.
\]

In other words, $\lambda$ is an upper bound on the $1$-power mean of the $n$ values of $d_i$ for $i \in V$. Therefore using Lemma 1 we obtain:
\begin{itemize}
\item For $\alpha < 0$, \[
Q_\alpha = S_\alpha(d_i) \ge nU^\alpha = n\lambda^{\alpha}.
\]
\item For $0 < \alpha \le 1$, \[
Q_\alpha = S_\alpha(d_i) \le nU^\alpha = n\lambda^{\alpha}.
\]
\end{itemize}
There is equality for $Q_\alpha$ when $d_i$ is equal for all vertices in $V$, that is for regular graphs.
\end{proof}

We can derive the following corollaries from Theorem  1 which strengthen known bounds.

Bollob\'as and Erdos \cite{bollobas98} proved that for $-1 \le \alpha < 0$:

\[
R_\alpha \ge m\left(\frac{\sqrt{8m + 1} -1}{2}\right)^{2\alpha}.
\]

We can generalise and strengthen this bound as follows.

\begin{cor}
For $\alpha < 0$, $R_\alpha$ is bounded from below by
\[
R_\alpha \ge m(2m - n + 1)^\alpha. 
\]
\end{cor}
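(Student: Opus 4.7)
The plan is to combine Theorem~1 with Hong's classical upper bound $\lambda \le \sqrt{2m - n + 1}$ on the spectral radius (valid for graphs with no isolated vertices, which is the standing assumption from the introduction). Theorem~1 already gives $R_\alpha \ge m\lambda^{2\alpha}$ for $\alpha < 0$, so everything reduces to comparing $\lambda^{2\alpha}$ with $(2m-n+1)^\alpha$.

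Squaring Hong's bound yields $\lambda^2 \le 2m - n + 1$. Since $\alpha < 0$, the map $x \mapsto x^\alpha$ is strictly decreasing on the positive reals, so this upper bound on $\lambda^2$ flips into the lower bound $(\lambda^2)^\alpha \ge (2m - n + 1)^\alpha$. Multiplying by $m$ and chaining with Theorem~1 gives
\[
R_\alpha \ge m\lambda^{2\alpha} \ge m(2m-n+1)^\alpha,
\]
which is the claim.

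Two small sanity checks are worth noting. First, $2m - n + 1 > 0$, because $2m = \sum_i d_i \ge n$ whenever there are no isolated vertices, so the real power $(2m - n + 1)^\alpha$ is unambiguously defined. Second, a quick algebraic comparison shows $2m - n + 1 \le \bigl((\sqrt{8m+1}-1)/2\bigr)^2$ whenever $m \le n(n-1)/2$, so because $\alpha<0$ the new bound is at least as strong as the Bollob\'as--Erd\H{o}s estimate, which is the stated purpose of the corollary. I do not foresee any real obstacle: the substantive work has been done in Theorem~1 and in Hong's inequality, and the corollary is essentially a one-line consequence of combining the two with the monotonicity of $x \mapsto x^\alpha$ for negative $\alpha$.
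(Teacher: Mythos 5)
Your proposal is correct and follows exactly the paper's own argument: apply Theorem~1 to get $R_\alpha \ge m\lambda^{2\alpha}$, invoke Hong's bound $\lambda^2 \le 2m-n+1$, and flip the inequality using the monotonicity of $x\mapsto x^\alpha$ for $\alpha<0$. Your closing comparison with the Bollob\'as--Erd\H{o}s bound via $2m \le n(n-1)$ likewise matches the paper's final step.
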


\begin{proof}
Hong \cite{hong93} proved that for graphs with no isolated vertices $\lambda^2 \le (2m - n + 1)$. Therefore using Theorem 1 and that $\alpha < 0$ and that $2m \le n(n - 1)$:

\[
R_\alpha \ge m\lambda^{2\alpha} \ge m(2m - n + 1)^\alpha \ge m\left(\frac{\sqrt{8m + 1} -1}{2}\right)^{2\alpha}. 
\]
\end{proof}

Li and Yang \cite{li04} proved that for $\alpha \le -1$:

\begin{equation}\label{eq:li}
R_{\alpha} \ge \frac{n(n- 1)^{1+2\alpha}}{2}.
\end{equation}

We can strengthen this bound as follows.

\begin{cor}
For $\alpha \le -1$, $R_\alpha$ is bounded from below by
\begin{equation}\label{eq:clive2}
R_\alpha \ge \frac{n^{2\alpha + 2} (\omega - 1)^{2\alpha + 1}}{2\omega^{2\alpha + 1}}.
\end{equation}

\end{cor}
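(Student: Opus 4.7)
The plan is to combine Theorem~1's spectral lower bound $R_\alpha \ge m\lambda^{2\alpha}$ with two clique-based upper bounds: Nikiforov's spectral inequality $\lambda^2 \le 2m(\omega-1)/\omega$ (valid since $G$ is $K_{\omega+1}$-free) and Tur\'an's theorem in the form $m \le (\omega-1)n^2/(2\omega)$. Both auxiliary bounds are \emph{upper} bounds, so whenever we raise them to a negative exponent the inequality flips, and the hypothesis $\alpha \le -1$ is exactly what we need to make both sign flips land in the right direction.

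First, Theorem~1 (with $\alpha<0$) gives $R_\alpha \ge m\lambda^{2\alpha}$, and raising Nikiforov's bound to the power $\alpha<0$ reverses it to $\lambda^{2\alpha} \ge (2m(\omega-1)/\omega)^{\alpha}$, so
\[
R_\alpha \;\ge\; m^{\alpha+1}\left(\frac{2(\omega-1)}{\omega}\right)^{\alpha}.
\]
Next, since $\alpha \le -1$ means $\alpha+1 \le 0$, raising Tur\'an's bound to the power $\alpha+1$ again reverses it, to $m^{\alpha+1} \ge \left(\frac{(\omega-1)n^2}{2\omega}\right)^{\alpha+1}$. Substituting and collecting powers of $2$, $\omega$, $\omega-1$, and $n$ in
\[
\left(\frac{2(\omega-1)}{\omega}\right)^{\alpha}\left(\frac{(\omega-1)n^2}{2\omega}\right)^{\alpha+1}
\]
simplifies (the factor $2^{\alpha}\cdot 2^{-\alpha-1}=2^{-1}$ combines with $(\omega-1)^{2\alpha+1}$, $\omega^{-(2\alpha+1)}$, and $n^{2\alpha+2}$) to give exactly $\dfrac{n^{2\alpha+2}(\omega-1)^{2\alpha+1}}{2\omega^{2\alpha+1}}$, as claimed.

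There is no serious obstacle beyond bookkeeping; the main design choice is to apply Nikiforov's \emph{mixed} bound, involving both $\lambda$ and $m$, before Tur\'an rather than after. Combining them first into the purely clique-based estimate $\lambda \le n(\omega-1)/\omega$ would leave a bare factor of $m$ to the first power, which could only be lower-bounded by the weak $m \ge n/2$ coming from the absence of isolated vertices. Applying Nikiforov first converts $m$ into $m^{\alpha+1}$ with nonpositive exponent, so that Tur\'an's upper bound becomes exactly the right tool, and this is precisely why the hypothesis needs to be $\alpha \le -1$ and not merely $\alpha<0$.
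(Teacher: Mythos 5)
Your proposal is correct and follows essentially the same route as the paper: apply $R_\alpha \ge m\lambda^{2\alpha}$ from Theorem~1, then Nikiforov's bound $\lambda^2 \le 2m(\omega-1)/\omega$ (reversed by the negative exponent $\alpha$), and finally Tur\'an's bound $m \le n^2(\omega-1)/(2\omega)$ (reversed by the nonpositive exponent $\alpha+1$), with the same bookkeeping yielding the stated expression. The only material in the paper's proof that you omit is the subsequent verification that this bound strengthens the Li--Yang bound~(\ref{eq:li}), which is commentary rather than part of proving the corollary itself.
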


\begin{proof}

Nikiforov \cite{nikiforov02} proved that $\lambda^2 \le 2m(\omega - 1)/\omega$. Noting that $\alpha \le -1$ we have:

\[
R_\alpha \ge m\lambda^{2\alpha} \ge \frac{m(2m(\omega - 1))^\alpha}{\omega^\alpha} = \frac{m^{\alpha+1}2^\alpha(\omega - 1)^\alpha}{\omega^\alpha}.
\]

Turan's theorem states that $m \le n^2(\omega - 1)/2\omega$. Therefore since $\alpha \le -1$:

\[
R_\alpha \ge \frac{n^{2(\alpha + 1)} (\omega - 1)^{\alpha+1} 2^\alpha(\omega - 1)^\alpha}{2^{\alpha+1}\omega^{\alpha+1}\omega^\alpha} = \frac{n^{2\alpha+2}(\omega - 1)^{2\alpha+1}}{2\omega^{2\alpha+1}}.
\]

We can demonstrate that (4) strengthens bound (3) as follows. We wish to show that for $\alpha \le -1$:

\[
\frac{n^{2\alpha + 2} (\omega - 1)^{2\alpha + 1}}{2\omega^{2\alpha + 1}} \ge \frac{n(n- 1)^{1+2\alpha}}{2}.
\]

This simplifies to:

\[
(n(\omega - 1))^{1+2\alpha} \ge ((n - 1)\omega)^{1+2\alpha}.
\]

Take the $(1+2\alpha)$ root of both sides and note that $1+2\alpha \le -1$. Therefore:

\[
n(\omega - 1) \le (n - 1)\omega
\]

which is true for all graphs.

\end{proof}

Lu, Liu and Tian \cite{lu04} proved that for $-1 \le \alpha < 0$:

\[
R_{\alpha} \ge 2^{-\alpha}n^{\alpha}m^{1-\alpha}\lambda^{3\alpha}.
\]

We can generalise this bound as follows.

\begin{cor}
For $\alpha < 0$, $R_\alpha$ is bounded from below by
\[
R_{\alpha} \ge 2^{-\alpha}n^{\alpha}m^{1-\alpha}\lambda^{3\alpha}.
\]

\end{cor}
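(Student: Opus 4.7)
The plan is to derive the bound as a direct consequence of Theorem~1 combined with the elementary spectral inequality $d = 2m/n \le \lambda$ (that is, the average degree never exceeds the spectral radius).

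First I would invoke Theorem~1, which gives $R_\alpha \ge m\lambda^{2\alpha}$ for $\alpha < 0$. The target bound has an extra factor of $\lambda^{\alpha}$ and involves $n$ and $m$ in a specific way, so the task reduces to showing that
\[
m\lambda^{2\alpha} \ge 2^{-\alpha} n^{\alpha} m^{1-\alpha} \lambda^{3\alpha}
\]
for $\alpha < 0$. Rewriting $m = m^{\alpha} \cdot m^{1-\alpha}$, this inequality is equivalent to $m^{\alpha} \ge (n\lambda/2)^{\alpha}$.

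Next I would verify that last inequality. From $2m/n \le \lambda$ we have $m \le n\lambda/2$, and raising both sides to the negative power $\alpha$ reverses the inequality to give $m^{\alpha} \ge (n\lambda/2)^{\alpha}$, as required. Chaining the two estimates then yields
\[
R_\alpha \ge m\lambda^{2\alpha} = m^{\alpha} \cdot m^{1-\alpha} \cdot \lambda^{2\alpha} \ge (n\lambda/2)^{\alpha} \cdot m^{1-\alpha} \cdot \lambda^{2\alpha} = 2^{-\alpha} n^{\alpha} m^{1-\alpha} \lambda^{3\alpha}.
\]

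There is essentially no obstacle here; the only point requiring care is tracking the sign of $\alpha$ when raising inequalities to the $\alpha$-th power, since the direction flips. Once that is handled correctly, the corollary drops out immediately, and the extension from the restricted range $-1 \le \alpha < 0$ of Lu, Liu and Tian to all $\alpha < 0$ comes for free because Theorem~1 and the bound $2m \le n\lambda$ both hold for every $\alpha < 0$ without any extra hypothesis.
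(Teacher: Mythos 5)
Your proof is correct and follows essentially the same route as the paper: both start from $R_\alpha \ge m\lambda^{2\alpha}$ (Theorem~1) and use $2m/n \le \lambda$ with the sign of $\alpha$ to absorb the extra factor, the only cosmetic difference being that you split off $m^{\alpha}$ while the paper splits off $\lambda^{-\alpha}$. No gaps.
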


\begin{proof}
Since $\alpha < 0$ and $\lambda \ge 2m/n$ we have that:

\[
R_{\alpha} \ge m\lambda^{2\alpha} = m\lambda^{3\alpha}\lambda^{-\alpha} \ge m\lambda^{3\alpha}(2m/n)^{-\alpha} = 2^{-\alpha}n^{\alpha}m^{1-\alpha}\lambda^{3\alpha}.
\]

\end{proof}

\section{Bounds for $R_\alpha$ and $Q_\alpha$ using degrees}

Ili\'c and  Stevanovi\'c \cite{ilic09} proved that $R_\alpha \ge md^{2\alpha}$ for $\alpha \ge 0$ and $Q_\alpha \ge nd^\alpha$ for $\alpha \ge 1$. We reproduce and extend these inequalities in the following Theorem, using Lemma 1.

\begin{thm}
We have the following lower and upper bounds on $R_\alpha$ and $Q_\alpha$:
\begin{itemize}
\item For $\alpha \ge 0$,
\[
R_\alpha \ge md^{2 \alpha}.
\]
\item For $\alpha < 0$ and $\alpha \ge 1$, 
\[
Q_\alpha \ge nd^\alpha.
\] 
For $0 < \alpha \le 1$,
\[
Q_\alpha \le nd^\alpha.
\]
\end{itemize}
\end{thm}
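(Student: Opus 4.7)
The plan is to invoke Lemma 1 in each part, reducing the theorem to identifying a suitable index $q$ together with a bound on $M_q(w)$: for $Q_\alpha$ the values are $w_i = d_i$, and for $R_\alpha$ they are $w_{ij} = d_i d_j$ (there are $m$ of these, one per edge).

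For $Q_\alpha$ the arithmetic mean of the degrees equals the average degree exactly, $M_1(d_i) = (1/n)\sum_{i \in V} d_i = d$. Taking $q = 1$ and $L = U = d$ in Lemma 1, the three sub-cases of that lemma ($p \ge 1$ with $p \ge 0$; $p \le 1$ with $p \ge 0$; $p \le 1$ with $p < 0$) deliver exactly the three claimed inequalities, namely $Q_\alpha \ge nd^\alpha$ for $\alpha \ge 1$, $Q_\alpha \le nd^\alpha$ for $0 < \alpha \le 1$, and $Q_\alpha \ge nd^\alpha$ for $\alpha < 0$.

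For $R_\alpha$ we want $M_q(d_i d_j) \ge d^2$ for some $q \le 0$, so that Lemma 1 supplies the lower bound $R_\alpha \ge m d^{2\alpha}$ for the full range $\alpha \ge 0$. The cleanest choice is $q = 0$, since $M_0$ is the smallest non-negative-index power mean. The key identity is
\[
\prod_{ij \in E} d_i d_j \;=\; \prod_{i \in V} d_i^{d_i},
\]
which holds because each vertex $i$ appears in exactly $d_i$ edges. Writing $d_i = d\, r_i$, so that $\sum_i r_i = n$ and $\sum_i d_i = 2m$, this gives
\[
M_0(d_i d_j) \;=\; d^2 \cdot \Bigl(\prod_i r_i^{d r_i}\Bigr)^{1/m},
\]
so $M_0(d_i d_j) \ge d^2$ reduces to $\sum_i r_i \log r_i \ge 0$, which is Jensen's inequality for the convex function $x \mapsto x \log x$ on the values $r_1,\ldots,r_n$ of mean $1$. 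Feeding $q = 0$ and $L = d^2$ into Lemma 1 then yields $R_\alpha \ge m d^{2\alpha}$. The only substantive step in the whole plan is this geometric-mean bound on the edge values; everything else is bookkeeping from Lemma 1.
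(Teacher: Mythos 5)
Your proposal is correct and follows the same route as the paper: for $Q_\alpha$ you use $M_1(d_i)=d$ as a simultaneous lower and upper bound on the $1$-power mean, and for $R_\alpha$ you use $M_0(d_id_j)\ge d^2$ as a lower bound on the $0$-power mean, then read off all four inequalities from Lemma 1 exactly as the paper does. The one difference is that the paper simply cites Ili\'c and Stevanovi\'c for the geometric-mean inequality $M_0(d_id_j)\ge d^2$, whereas you prove it from scratch via the identity $\prod_{ij\in E} d_id_j=\prod_{i\in V} d_i^{d_i}$ and Jensen's inequality applied to the convex function $x\mapsto x\log x$ on the normalized degrees $r_i=d_i/d$; that argument is valid and makes the proof self-contained, which is a modest improvement over the citation.
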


\begin{proof}

Ili\'c and Stevanovi\'c \cite{ilic09} proved that:

\[
\frac{R_1}{m} = \frac{\sum_{ij \in E} d_i \cdot d_j}{m} \ge \left(\prod_{ij \in E} d_i \cdot d_j\right)^{1/m} =M_0(d_i \cdot d_j) \ge d^2.
\]

Therefore $d^2$ is a lower bound for the $0$-power mean of $R_\alpha$. Hence using Lemma 1, $R_\alpha \ge md^{2\alpha}$ for $\alpha \ge 0$.

\[
\frac{Q_1}{n} = \frac{2m}{n} = d.
\]

Therefore $d$ can be regarded as a lower and upper bound for the $1$-power mean of $Q_\alpha$. Hence using Lemma 1, $Q_\alpha \ge nd^\alpha$ for $\alpha \ge 1$, $Q_\alpha \le nd^\alpha$ for $0 < \alpha \le 1$ and $Q_\alpha \ge nd^\alpha$ for $\alpha < 0$.
\end{proof}

Bollob\'as and Erd{\"o}s \cite{bollobas98} proved that for $0 < \alpha \le 1$:
\[
R_\alpha \le m\left(\frac{\sqrt{8m + 1} -1}{2}\right)^{2\alpha}.
\]

We strengthen this bound in the following theorem.

\begin{thm}
For $0 < \alpha \le 1$, $R_\alpha$ is bounded from above by 

\[
R_\alpha \le m(2m - n + 1)^\alpha.
\]
\end{thm}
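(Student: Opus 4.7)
The plan is to reduce the theorem to a $1$-power-mean bound and then invoke Lemma 1. Specifically, if I can establish $M_1(d_i d_j) = R_1/m \le 2m - n + 1$, equivalently $R_1 \le m(2m - n + 1)$, then applying Lemma 1 with $q = 1$ and $U = 2m - n + 1$ immediately yields, for each $0 < \alpha \le 1$, the desired bound $R_\alpha = S_\alpha(d_i d_j) \le m(2m - n + 1)^\alpha$.

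To prove $R_1 \le m(2m - n + 1)$, I will work at the vertex level via the standard identity $2R_1 = \sum_v d_v T_v$, where $T_v := \sum_{j \sim v} d_j$. The crux is then the vertexwise inequality $T_v \le 2m - n + 1$. To establish this, I use the hypothesis that $G$ has no isolated vertices: each of the $n - 1 - d_v$ vertices in $V \setminus (N(v) \cup \{v\})$ has degree at least $1$, so their degree sum is at least $n - 1 - d_v$. Since this same degree sum equals $2m - d_v - T_v$ by the handshake identity, rearranging gives $T_v \le 2m - n + 1$.

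Summing then yields $2R_1 = \sum_v d_v T_v \le (2m - n + 1)\sum_v d_v = 2m(2m - n + 1)$, and the theorem follows from Lemma 1. I do not expect any serious obstacle, as the only nontrivial step is the inequality $T_v \le 2m - n + 1$, whose proof is a short double-counting argument in the same spirit as Hong's derivation of $\lambda^2 \le 2m - n + 1$. Equality holds throughout for $K_n$ and for the star $K_{1,n-1}$, confirming the bound is sharp and a genuine strengthening of the Bollob\'as--Erd\"os bound.
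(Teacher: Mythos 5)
Your proof is correct, and the final step (feeding the $1$-power-mean bound $R_1/m \le 2m-n+1$ into Lemma 1) is exactly what the paper does. Where you genuinely diverge is in how you establish $R_1 \le m(2m-n+1)$. The paper quotes the Das--Gutman upper bound $R_1 \le 2m^2 - (n-1)m\delta + \tfrac{1}{2}(\delta-1)m\left(\tfrac{2m}{n-1}+n-2\right)$ and then asserts, with the cases $\delta=1$ and $\delta>1$ left as ``straightforward,'' that this implies $R_1 \le m(2m-n+1)$. You instead prove the inequality from scratch: writing $2R_1 = \sum_v d_v T_v$ with $T_v = \sum_{j\sim v} d_j$, you show $T_v \le 2m-n+1$ vertexwise by noting that the $n-1-d_v$ vertices outside $N(v)\cup\{v\}$ each contribute at least $1$ to the degree sum $2m - d_v - T_v$; summing against $\sum_v d_v = 2m$ finishes it. Both the identity and the double count are correct, and your equality check on $K_n$ and $K_{1,n-1}$ is right. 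Your route buys self-containedness and transparency --- it avoids an external citation and an unverified case analysis, and it makes visible that the quantity $2m-n+1$ entering here is the same neighbor-degree-sum bound that underlies Hong's $\lambda^2 \le 2m-n+1$, which unifies this theorem with Corollary 1. The paper's route is shorter on the page but only because it outsources the real work.
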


\begin{proof}

Das and Gutman \cite{das04} proved the following bound:

\[
R_1  \le 2m^2 - (n - 1)m\delta + \frac{1}{2}(\delta - 1)m\left(\frac{2m}{n - 1} + n - 2\right).
\]

If $\delta = 1$ then clearly $R_1 \le m(2m - n + 1)$. If $\delta > 1$ then it is straightforward to show that $R_1 \le m(2m - n + 1)$. Therefore $R_1/m \le 2m - n + 1$, so $(2m - n + 1)$ is an upper bound for the $1-$power mean of $R_\alpha$.

Hence using Lemma 1, $R_\alpha \le m(2m - n + 1)^\alpha$ for $0 < \alpha \le 1$.
\end{proof}

\begin{thm}
For $\alpha < 0$, $Q_\alpha$ is bounded from below by 

\[
Q_\alpha \ge n\left(d(\Delta + \delta) - \Delta\delta\right)^{\alpha/2}
\]

\end{thm}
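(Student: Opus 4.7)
The target bound has the form $Q_\alpha \ge n U^\alpha$ with $U = (d(\Delta+\delta) - \Delta\delta)^{1/2}$ and $\alpha < 0$. Reading this against Lemma~1 suggests that $U$ should be an upper bound on the $q$-power mean of the degrees for some $q \ge \alpha$. The exponent $\alpha/2$ strongly suggests taking $q = 2$, so the plan is to show
\[
M_2(d_1,\ldots,d_n)^2 \;=\; \frac{1}{n}\sum_{i\in V} d_i^2 \;\le\; d(\Delta+\delta) - \Delta\delta,
\]
and then invoke Lemma~1 with $p=\alpha<0\le q=2$ and $U=(d(\Delta+\delta)-\Delta\delta)^{1/2}$, which immediately gives $Q_\alpha = S_\alpha(d_i) \ge nU^\alpha = n(d(\Delta+\delta)-\Delta\delta)^{\alpha/2}$.

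To establish the displayed inequality on the second power mean, I would use the elementary observation that every degree lies in the interval $[\delta,\Delta]$, so $(d_i - \Delta)(d_i - \delta) \le 0$ for every $i \in V$. Summing over $i$ and expanding yields
\[
\sum_{i\in V} d_i^2 \;\le\; (\Delta+\delta)\sum_{i\in V} d_i - n\Delta\delta.
\]
Dividing by $n$ and using $\sum d_i / n = d$ gives exactly $\tfrac{1}{n}\sum d_i^2 \le d(\Delta+\delta) - \Delta\delta$, which is the desired Kober/Diaz--Metcalf-type bound.

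The final step is the mechanical application of Lemma~1. Since $\alpha < 0 \le 2$, we are in the ``$p \le q$ and $p < 0$'' case, so the lower bound $S_p(w_i) \ge nU^p$ applies (the direction of the inequality flipping is precisely what the lemma handles), and the stated theorem follows. There is no real obstacle here; the only substantive content is recognising the $q=2$ mean as the right handle and noting the one-line proof of $\sum (d_i-\Delta)(d_i-\delta) \le 0$. Equality holds when the degrees are all equal to one of $\Delta$ or $\delta$, with matching behaviour in the power mean inequality, i.e.\ essentially for regular graphs.
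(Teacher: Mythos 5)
Your proof is correct and follows the same route as the paper: identify $\sqrt{d(\Delta+\delta)-\Delta\delta}$ as an upper bound on the $2$-power mean of the degrees and apply Lemma~1 with $p=\alpha<0\le q=2$. The only difference is that the paper simply cites Das for the inequality $\frac{1}{n}\sum_{i\in V} d_i^2 \le d(\Delta+\delta)-\Delta\delta$, whereas you give the short self-contained derivation from $(d_i-\Delta)(d_i-\delta)\le 0$, which is a welcome but inessential addition.
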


\begin{proof}
Das \cite{das04} proved that:

\[
\frac{Q_2}{n} = \frac{\sum_{i \in V} d_i^2}{n} \le d(\Delta + \delta) - \Delta\delta.
\]

Taking the square root of both sides of this inequality, we see that $\sqrt{d(\Delta + \delta) - \Delta\delta}$ is an upper bound for the  $2$-power mean of $Q_\alpha$. Using Lemma 1 therefore completes the proof.

\end{proof}

\section{Implications for $R_{-1}$}

Cavers \emph{et al} \cite{cavers10} reviewed upper and lower bounds for $R_{-1}$ in the context of bounds for Randi\'c energy. In particular, Shi \cite{shi09} proved that:

\begin{equation}\label{eq:shi}
R_{-1} \ge \frac{n}{2\Delta}
\end{equation}

with equality if and only if $G$ is regular and Li and Yang \cite{li04} proved that:

\begin{equation}\label{eq:li}
R_{-1} \ge \frac{n}{2(n - 1)},
\end{equation}

with equality if and only if $G$ is a complete graph. Liu and Gutman \cite{liu07} proved that for graphs with no isolated vertex:

\begin{equation}\label{eq:liu}
R_{-1} \ge \frac{n - 1}{m}
\end{equation}

with equality only for Star graphs. Clark and Moon \cite{clark00} proved that for trees, $R_{-1} \ge 1$.

Below, in Corollary 4, we prove that:

\begin{equation}\label{eq:clive}
R_{-1} \ge \frac{\omega}{2(\omega - 1)} \mbox{  or equivalently  } \frac{2R_{-1}}{2R_{-1} - 1} \le \omega(G),
\end{equation}

with equality for semiregular bipartite and regular complete $\omega$-partite graphs. (A semiregular bipartite graph is a bipartite graph for which all vertices on the same side of the bipartition have the same degree.)

Bound (8) clearly strengthens bound (6). It also demonstrates that $R_{-1} \ge 1$ not only for trees but for all triangle-free graphs. Bound (8) never outperforms bound (5) for regular graphs but it does outperform bound (5) for some irregular graphs, such as irregular complete bipartite graphs. 

\begin{cor} $R_{-1}$ is bounded from below by
\[
R_{-1} \ge \frac{\omega}{2(\omega - 1)}.
\]

This is exact for semiregular bipartite and regular complete $\omega$-partite graphs.
\end{cor}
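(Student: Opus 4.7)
The plan is to combine Theorem~1 (specialised at $\alpha=-1$) with Nikiforov's clique bound on $\lambda^2$, exactly as in Corollary~2 but stopped one step earlier rather than pushed further via Tur\'an's theorem.

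First I would invoke Theorem~1 with $\alpha=-1<0$ to write
\[
R_{-1} \ge m \lambda^{-2} = \frac{m}{\lambda^2}.
\]
Next I would quote Nikiforov's inequality \cite{nikiforov02}, already used in Corollary~2, namely $\lambda^2 \le 2m(\omega-1)/\omega$. Since both $m$ and $\omega-1$ are positive (no isolated vertices, $\omega\ge 2$ unless $G$ is edgeless), I may divide, obtaining
\[
\frac{m}{\lambda^2} \ge \frac{m\,\omega}{2m(\omega-1)} = \frac{\omega}{2(\omega-1)}.
\]
Chaining these two inequalities yields the claimed bound immediately; the algebraic equivalence with $2R_{-1}/(2R_{-1}-1)\le \omega$ then follows by solving for $\omega$.

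For the equality discussion I would track when both bounds above are tight simultaneously. The first bound, coming from the Favaron upper bound on $M_{1/2}(d_i d_j)$ used in Theorem~1, is sharp precisely when $d_i d_j$ is constant across all edges, i.e.\ for regular graphs and semiregular bipartite graphs. Nikiforov's bound is sharp for regular complete $\omega$-partite graphs (and, when $\omega=2$, for complete bipartite graphs, which include semiregular bipartite graphs with $\omega=2$). Intersecting these two classes gives the cases listed in the statement: semiregular bipartite graphs (where $\omega=2$ forces $\omega/(2(\omega-1))=1$ and the star-like and $K_{p,q}$ examples fit) and regular complete $\omega$-partite graphs.

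The only mild subtlety is the equality analysis: I would need to check that no extraneous equality cases are introduced by the two-step chaining, and that the $\omega=2$ case recovers the known triangle-free consequence $R_{-1}\ge 1$ cleanly. The derivation itself is essentially mechanical once Theorem~1 and Nikiforov's bound are in hand, so I expect no real obstacle beyond presenting the equality conditions accurately.
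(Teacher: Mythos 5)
Your proposal matches the paper's own proof exactly: both specialise Theorem~1 at $\alpha=-1$ to get $R_{-1}\ge m/\lambda^2$ and then apply Nikiforov's bound $\lambda^2\le 2m(\omega-1)/\omega$ to cancel the $m$. Your additional discussion of when the two inequalities are simultaneously tight goes slightly beyond what the paper writes out, but the core argument is the same.
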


\begin{proof}

Letting $\alpha = -1$ we have $R_{-1} \ge m/\lambda^2$. Nikiforov \cite{nikiforov02} proved that:

\[
\lambda^2 \le \frac{2m(\omega - 1)}{\omega}.
\]

Therefore:

\[
R_{-1} \ge \frac{m}{\lambda^2} \ge \frac{m\omega}{2m(\omega - 1)} = \frac{\omega}{2(\omega - 1)}.
\]

\end{proof}

\begin{cor}
For chemical graphs, other than $K_5$, $R_{-1} \ge 2/3$.

\end{cor}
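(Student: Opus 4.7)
The plan is to split into cases based on the clique number $\omega(G)$ and invoke Corollary 4. Since $G$ is a chemical graph, the maximum degree satisfies $\Delta(G) \le 4$, and therefore $\omega(G) \le \Delta(G) + 1 \le 5$. The function $f(\omega) = \omega/(2(\omega-1))$ is strictly decreasing for $\omega \ge 2$, with $f(2)=1$, $f(3)=3/4$, $f(4)=2/3$, $f(5)=5/8$; this is what forces us to treat $\omega = 5$ separately and to exclude $K_5$.

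First I would dispose of the case $\omega(G) \le 4$. By Corollary 4, $R_{-1}(G) \ge \omega(G)/(2(\omega(G)-1)) \ge f(4) = 2/3$. Nothing more is needed here.

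The main work is the case $\omega(G) = 5$. Pick a copy of $K_5$ inside $G$; each of its five vertices already has degree $4$ within that clique, and since $\Delta(G) \le 4$, none of those vertices can have any neighbour outside the clique. Hence the $K_5$ is a whole connected component of $G$. Since $R_{-1}$ is additive over connected components, write $R_{-1}(G) = \tfrac{5}{8} \cdot t + R_{-1}(G')$, where $t \ge 1$ counts the $K_5$-components and $G'$ is the union of the remaining components (all of which, being chemical graphs with no isolated vertex, have $\omega \le 4$, since any component of clique number $5$ has already been absorbed into the $K_5$ count). If $G' \ne \emptyset$, apply the $\omega \le 4$ case componentwise to $G'$ to get $R_{-1}(G') \ge 2/3$ and we are done. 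If $G' = \emptyset$, then $G$ is a disjoint union of $t$ copies of $K_5$; the hypothesis $G \ne K_5$ gives $t \ge 2$, so $R_{-1}(G) \ge 2 \cdot 5/8 = 5/4 \ge 2/3$.

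The only subtle point — really the only place the exclusion of $K_5$ is used — is sub-case $G' = \emptyset$, $t=1$: that is precisely $G = K_5$, where $R_{-1} = 10/16 = 5/8 < 2/3$, so the hypothesis is sharp. I expect the main obstacle (if any) to be making sure no chemical graph with $\omega = 5$ can hide a $K_5$ that is not a full component; but the degree bookkeeping above settles this cleanly.
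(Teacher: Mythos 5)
Your proof is correct, and it takes a genuinely different route from the paper's. The paper appeals to Brooks' theorem: excluding $K_5$, a (connected) chemical graph satisfies $\omega(G) \le \chi(G) \le \Delta \le 4$, and then Corollary 4 gives $R_{-1} \ge 4/6 = 2/3$ in one line. You instead use only the elementary bound $\omega \le \Delta + 1 \le 5$, and you handle the troublesome case $\omega = 5$ by a structural argument: the degree constraint forces any $K_5$ to be an entire connected component, and additivity of $R_{-1}$ over components then finishes the job. What your approach buys is completeness for disconnected graphs: a chemical graph such as $K_5 \cup K_2$ is ``other than $K_5$'' yet has $\omega = 5$, so the paper's deduction $\omega \le 4$ (which rests on Brooks' theorem for connected graphs) does not literally apply to it, and Corollary 4 alone would only give $5/8 < 2/3$ there. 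Your component decomposition closes that gap cleanly, at the cost of a longer argument; the paper's proof is shorter but implicitly assumes connectivity (and its intermediate claim $\omega \le \Delta$ is itself false for $K_2, K_3, K_4$, though the conclusion $\omega \le 4$ is what matters). One small point of care in your write-up that you handled correctly: components of $G'$ cannot have clique number $5$, since any $K_5$ they contained would itself be a full component and hence already counted in $t$.
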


\begin{proof}
For a chemical graph $\Delta \le 4$. It follows from Brooks' famous theorem \cite{brooks41} that, excluding $K_5$, $\omega(G) \le \Delta \le 4$. Therefore:

\[
R_{-1} \ge \frac{\omega}{2(\omega - 1)} \ge \frac{4}{6} = \frac{2}{3}.
\]

This is exact for $K_4$.

\end{proof}

\section{Implications for $R_{-0.5}$}

$R_{-0.5}$ is the original topological index devised by Milan Randi\'c in 1975 and has consequently been investigated more than any other general Randi\'c index.

Bollob\'as and Erd{\"o}s \cite{bollobas98} proved that for graphs with no isolated vertex:

\begin{equation}\label{eq:bela}
\frac{n}{2} \ge R_{-0.5} \ge \sqrt{n - 1}
\end{equation}

with equality only for Star graphs. In Corollary 7 we prove that:

\begin{equation}\label{eq:clive2}
R_{-0.5} \ge \frac{m}{\sqrt{2m - n + 1}}.
\end{equation}

Since connected graphs have $m \ge n - 1$, it is straightforward to show that bound (10)  is never worse than the well known bound (9) for connected graphs.

Hansen and Vukicevi\'c \cite{hansen09} proved that $\chi(G) \le 2R_{-0.5}$. In Corollary 6 we provide a simple alternative proof of this result using Theorem 1.

\begin{cor}

Hansen and Vukicevi\'c \cite{hansen09} proved that $\chi(G) \le 2R_{-0.5}$. We can use Theorem 1 to strengthen their bound as follows.

\[
2R_{-0.5} \ge \lambda + 1 \ge \chi(G).
\]

\end{cor}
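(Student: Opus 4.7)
The plan is to prove the chain $2R_{-0.5} \ge \lambda + 1 \ge \chi(G)$ by handling the two inequalities independently. The right-hand inequality $\chi(G) \le \lambda + 1$ is Wilf's classical bound relating the chromatic number to the spectral radius, which I will simply cite. The real content is the left-hand inequality $2R_{-0.5} \ge \lambda + 1$, which I will deduce by combining Theorem 1 with Hong's bound on $\lambda^2$ that was already invoked in the proof of Corollary 1.

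Specialising Theorem 1 to $\alpha = -1/2$ yields $R_{-0.5} \ge m/\lambda$, so the left-hand inequality reduces to showing $2m/\lambda \ge \lambda + 1$, or equivalently $\lambda^2 + \lambda \le 2m$. Here I would use Hong's inequality $\lambda^2 \le 2m - n + 1$ together with the elementary fact $\lambda \le \Delta \le n - 1$. Adding these two estimates gives
\[
\lambda^2 + \lambda \;\le\; (2m - n + 1) + (n - 1) \;=\; 2m,
\]
which is exactly what is needed. Dividing through by $\lambda$ and then chaining the pieces produces $2R_{-0.5} \ge 2m/\lambda \ge \lambda + 1 \ge \chi(G)$.

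The argument is really just a repackaging of three classical ingredients: the $\alpha = -1/2$ case of Theorem 1, Hong's spectral bound, and Wilf's theorem, and there is no serious obstacle. The one step that could be overlooked is noticing that the deficit of $n-1$ in Hong's bound $\lambda^2 \le 2m - n + 1$ is exactly compensated by the trivial estimate $\lambda \le n-1$, so that the two inequalities dovetail to give $\lambda^2 + \lambda \le 2m$ with no slack. Any looser bound on $\lambda$ would fail to close the gap.
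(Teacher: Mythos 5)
Your proof is correct, and it reaches the crucial intermediate inequality $\lambda^2+\lambda\le 2m$ (equivalently, Stanley's bound $\lambda\le(\sqrt{8m+1}-1)/2$) by a genuinely different route from the paper. You combine Hong's bound $\lambda^2\le 2m-n+1$, which the paper already uses in Corollaries 1 and 7, with the trivial estimate $\lambda\le\Delta\le n-1$, and the two deficits cancel exactly. The paper instead uses Nikiforov's clique-number bound $\lambda^2\le 2m(\omega-1)/\omega$ together with the Tur\'an-type inequality $\omega(\omega-1)\le\chi(\chi-1)\le 2m$, which gives $\lambda\le 2m/\omega$ and hence $\lambda(\lambda+1)\le 2m(\omega-1)/\omega+2m/\omega=2m$. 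Your version is more economical: it needs only ingredients already present elsewhere in the paper and no clique-number machinery, and it makes transparent why the constant in Hong's bound is exactly what is needed. The paper's version, on the other hand, ties this corollary to the clique-number theme running through Corollaries 2 and 4 and exhibits the slack in terms of $\omega$. The remaining steps --- $R_{-0.5}\ge m/\lambda$ from Theorem 1 with $\alpha=-1/2$, and Wilf's bound $\chi(G)\le\lambda+1$ --- are identical in both arguments, so the final chain $2R_{-0.5}\ge 2m/\lambda\ge\lambda+1\ge\chi(G)$ is obtained the same way once $\lambda^2+\lambda\le 2m$ is in hand.
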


\begin{proof}

As noted above $\lambda^2 \le 2m(\omega - 1)/\omega$ and it is well known that $(\omega - 1)\omega \le (\chi - 1)\chi \le 2m$ and that $\chi(G) \le 1 + \lambda.$ Therefore $\lambda \le 2m/\omega$, so:

\[
\lambda(\lambda + 1) \le \frac{2m(\omega - 1)}{\omega} + \frac{2m}{\omega} = 2m.
\]

Hence:

\[
2R_{-0.5} \ge \frac{2m}{\lambda} \ge \lambda + 1 \ge \chi(G).
\]

\end{proof}

\begin{cor}

Hong \cite{hong93} proved that for graphs with no isolated vertices, $\lambda^2 \le (2m - n + 1)$. Therefore with $\alpha = -0.5$:

\[
R_{-0.5} \ge \frac{m}{\lambda} \ge \frac{m}{\sqrt{2m - n + 1}} \ge \sqrt{n - 1} \mbox{ for connected graphs}.
\]

\end{cor}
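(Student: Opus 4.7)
The plan is to chain together three inequalities, each of which is essentially off-the-shelf once Theorem~1 and Hong's bound are available. First I would instantiate Theorem~1 at $\alpha = -1/2$, which is in the range $\alpha < 0$ where the lower bound $R_\alpha \ge m\lambda^{2\alpha}$ applies; with $2\alpha = -1$ this yields $R_{-0.5} \ge m/\lambda$. Second, Hong's bound $\lambda^2 \le 2m - n + 1$ (valid since the graph has no isolated vertices) gives $\lambda \le \sqrt{2m-n+1}$, so dividing $m$ by the two sides (both positive) reverses the inequality to produce $m/\lambda \ge m/\sqrt{2m-n+1}$.

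The only step requiring a small computation is the final inequality $m/\sqrt{2m-n+1} \ge \sqrt{n-1}$ for connected graphs. I would observe that connectedness gives $m \ge n-1$, hence $2m-n+1 \ge n-1 \ge 0$, so all quantities are well-defined and nonnegative and I may square both sides. The claim then reduces to $m^2 \ge (n-1)(2m-n+1)$, which after expanding the right-hand side rearranges to $m^2 - 2(n-1)m + (n-1)^2 \ge 0$, i.e.\ $(m - (n-1))^2 \ge 0$. This is manifestly true, with equality precisely when $m = n-1$, i.e.\ for trees, which is consistent with the known equality case of the Bollob\'as--Erd\H{o}s bound $R_{-0.5} \ge \sqrt{n-1}$ (equality only for star graphs, which are trees).

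There is no real obstacle: the only item to be careful about is invoking connectedness at the last step so that $m \ge n-1$ and the algebraic square-and-rearrange manipulation is legitimate. The remaining two links in the chain are direct applications of Theorem~1 and Hong's spectral bound.
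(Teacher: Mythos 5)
Your proposal is correct and follows exactly the route the paper takes: Theorem~1 at $\alpha=-0.5$ gives $R_{-0.5}\ge m/\lambda$, Hong's bound gives the middle link, and the final inequality (which the paper leaves as ``straightforward'') reduces to $(m-(n-1))^2\ge 0$ using $m\ge n-1$ for connected graphs, just as you show. No issues.
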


\section{Summary}

The following tables summarise the power means we have used in this paper.

\begin{table}[H]
\caption{Power means for $R_\alpha$}
\centering
\begin{tabular}{c c c}
\hline \hline
Power mean & Lower bound & Upper bound \\[0.5ex]
\hline
$0.5$ &           & $\lambda^2$ \\
$0$   & $d^2$ &  \\
$1$   &           & $2m - n + 1$ \\
\hline
\end{tabular}
\end{table}

\begin{table}[H]
\caption{Power means for $Q_\alpha$}
\centering
\begin{tabular}{c c c}
\hline \hline
Power mean & Lower bound & Upper bound \\[0.5ex]
\hline
$1$ & & $\lambda$ \\
$1$ & $d$ & $d$ \\
$2$ & & $\sqrt{d(\Delta + \delta) - \Delta\delta}$ \\
\hline
\end{tabular}
\end{table}

There are, we expect, further useful power means for $R_\alpha$ and $Q_\alpha$ to be found.

\section*{Acknowledgements}

We would like to thank Michael Cavers and Tam\'as R\'eti for helpful comments on drafts of this paper.

\end{document}